\newtheorem{theorem}{Theorem}
\newtheorem{corollary}[theorem]{Corollary}
\theoremstyle{definition}
\newtheorem{definition}[theorem]{Definition}
\newtheorem{example}[theorem]{Example}
\def\@email#1#2{%
 \endgroup
 \patchcmd{\titleblock@produce}
  {\frontmatter@RRAPformat}
  {\frontmatter@RRAPformat{\produce@RRAP{*#1\href{mailto:#2}{#2}}}\frontmatter@RRAPformat}
  {}{}
}%
\begin{document}


\title[A New Approach to Defining Cochain Complexes for Tri-dendriform algebra]{A New Approach to Defining Cochain Complexes for Tri-dendriform algebra}
\author{H. Alhussein$^{1),2)}$}
 \altaffiliation[Corresponding author, ]{email: k.alkhussein@g.nsu.ru}

\affiliation{$^{1)}$ Siberian State University of Telecommunication and Informatics, Novosibirsk, Russia.}%

\affiliation{$^{2)}$Novosibirsk State University of Economics and Management, Novosibirsk, Russia.}

\date{\today}

\begin{abstract}
Our constructions provide a systematic way to study cohomology tri-dendriform algebra via classical cohomology, simplifying computations and enabling the use of established techniques.
\end{abstract}

\maketitle

\section{Introdction}
The study of algebraic structures defined by multiple binary operations has been profoundly advanced by the work of Loday and his school \cite{Loday01}. These so-called Loday-type algebras, which include dialgebras \cite{Loday01}, dendriform algebras \cite{LodayRonco98}, and their generalizations, are not only of intrinsic algebraic interest but also provide a unifying framework for connections with operad theory, cyclic homology, and mathematical physics \cite{LodayVallette12}.

Among these structures, tri-associative and tri-dendriform algebras represent a significant hierarchical level in the "splitting" of associativity. A tri-associative algebra, as defined in \cite{Loday01}, is equipped with three binary operations that satisfy a system of 11 intertwined associativity and distributivity relations. A tri-dendriform (or post-associative) algebra \cite{Loday01} features three operations—\(\prec\) (left), \(\succ\) (right), and \(\cdot\) (middle)—whose sum \(x \circ y = x \prec y + x \succ y + x \cdot y\) is associative, but which individually obey a more complex set of seven identities. These structures have found remarkable applications, notably in the combinatorics of planar trees \cite{LodayRonco98} and within the Hopf algebraic approach to renormalization in quantum field theory \cite{ConnesKreimer00}. In this physical context, the three operations of a tri-dendriform algebra provide the fundamental combinatorial rules for assembling and disentangling the nested divergences of Feynman diagrams during renormalization.

A central challenge in the exploration of any such algebraic structure is the development of its cohomology theory. Cohomology groups are indispensable for classifying deformations \cite{GerstenhaberSchack92}, understanding extensions, and identifying obstructions. For classical associative algebras, the Hochschild cohomology provides a complete and well-understood framework \cite{Weibel94}. However, for tri-algebraic structures, the cochain complexes are derived directly from the defining multilinear relations, leading to differentials that are sums over all possible placements of a new variable within the multi-operational context \cite{MarklShnider95}. This inherent complexity often renders direct computation prohibitive, creating a significant barrier to a deeper homological understanding of these algebras.

In this paper, we introduce a novel methodology that circumvents this complexity by embedding the problem into the well-mapped territory of Hochschild cohomology. Our approach is built upon two principal contributions. First, we establish a general tensor product construction: we prove that the tensor product of a commutative triassociative algebra and a tridendriform algebra canonically carries the structure of an associative algebra. This result is non-trivial, relying on a precise interplay between the compatibility laws of the two constituent algebras.

Second, and most crucially, we leverage this construction to define an explicit cochain map
\[
\Psi : C^*_{\mathrm{tri}}(B,N) \hookrightarrow C^*_{\mathrm{Hoch}}(A \otimes B, A \otimes N),
\]
where \(A\) is a free commutative tri-associative algebra and \(N\) is a module over \(B\). The map \(\Psi\) is defined combinatorially, summing over all partitions of the inputs and applying the tri-associative operations in \(A\) to the tri-dendriform cochain in \(B\). A detailed and intricate computation confirms that \(\Psi\) commutes with the differentials, establishing it as an injective morphism of cochain complexes.

The profound implication is that the cohomology of the more complex tridendriform structure can be studied through the lens of classical Hochschild cohomology. Our method thus provides a systematic pathway to harness the powerful machinery already developed for associative algebras—including long exact sequences, spectral sequences, and computational tools \cite{Weibel94}—for the analysis of operadic cohomologies. This not only simplifies computations but also enables new ones previously out of reach.

To illustrate the efficacy of our approach, we provide detailed examples, computing the second cohomology group of specific finite-dimensional tri-dendriform algebras by analyzing their image under \(\Psi\) in the Hochschild complex. In summary, this work bridges a significant gap in the homological theory of Loday-type algebras, offering a unifying strategy for reducing complex operadic cohomologies to more manageable classical ones \cite{LodayVallette12}.

\section{Commutative tri-algebra and tri-dendriform  algebras}
\begin{definition} [\cite{ LodayVallette12}]
 A commutative tri-algebra is a vector space  \( A \) equipped with two binary operations \( \ast \) and \( \bullet \) such that:

\begin{itemize}
    \item \( (A, \ast) \) is a Perm-algebra:
    \[
    (x \ast y) \ast z = x \ast (y \ast z) = x \ast (z \ast y),
    \]
    
    \item \( (A, \bullet) \) is a commutative algebra:
    \[
    x \bullet y = y \bullet x,
    \]
    \[
    (x \bullet y) \bullet z = x \bullet (y \bullet z),
    \]
    
    \item The two operations \( \ast \) and \( \bullet \) must verify the following compatibility relations:
    \[
    x \ast (y \bullet z) = x \ast (y \ast z),
    \]
    \[
    (x \bullet y) \ast z = x \bullet (y \ast z).
    \]
\end{itemize}
\end{definition}

\begin{definition}[\cite{ LodayVallette12}]
 A vector space \( B \) over a field \( \Bbbk \) with three bilinear operations \( \succ, \prec, \cdot \) is called a tri-dendriform (post-associative) algebra, if the following identities are satisfied for all \( x, y, z \in B \):

\begin{align*}
(x \prec y) \prec z & = x \prec (y \prec z+ y \succ z+y \cdot z), \\ 
(x \succ y) \prec z & = x \succ (y \prec z), \\ 
(x \prec y+ x \succ y+x \cdot y) \succ z & = x \succ (y \succ z), \\ 
x \succ (y \cdot z) & = (x \succ y) \cdot z, \\ 
(x \prec y) \cdot z & = x \cdot (y \succ z), \\ 
(x \cdot y) \prec z & = x \cdot (y \prec z), \\ 
(x \cdot y) \cdot z & = x \cdot (y \cdot z).
\end{align*}
In particular, the \textbf{total product} $x \circ y := x \prec y + x \succ y+x.y$ is associative.
\end{definition}

\begin{example}\label{exp:tridend}
Let    $B$ be a $1$-dimensional tri-dendriform algebra with basis $\{e\}$ and operations:
\begin{align*}
e \prec e = e \quad 
, e \succ e = e, \quad  e.e=-e.
\end{align*}
\end{example}

\begin{example}\label{exp:tridend1}
Let    $B$ be a $2$-dimensional tri-dendriform algebra with basis $\{e_1,e_2\}$ and operations:

\[
e_1 \prec e_1 = e_1,\quad e_1 \succ e_1 = e_1,\quad e_1 \cdot e_1 = -e_1
\]
\[
e_2 \prec e_2 = e_2,\quad e_2 \succ e_2 = e_2,\quad e_2 \cdot e_2 = -e_2
\]
\[
e_1 \prec e_2 = 0,\quad e_1 \succ e_2 = 0,\quad e_1 \cdot e_2 = 0
\]
\[
e_2 \prec e_1 = 0,\quad e_2 \succ e_1 = 0,\quad e_2 \cdot e_1 = 0.
\]

\end{example}

\section{Associative and tri-dendriform  algebra}
\begin{theorem}
Let $(A, \ast, \bullet)$ be a commutative tri-algebra and $( B,\succ, \prec, \cdot )$ be a  tridendriform algebra. Define the product on $A \otimes B$ by:
\[
(a_1 \otimes b_1). (a_2 \otimes b_2) = (a_1 * a_2) \otimes (b_1 \prec b_2) + (a_2 * a_1) \otimes (b_1 \succ b_2) + (a_1 \bullet a_2) \otimes (b_1 \cdot b_2)
\]
Then $(A \otimes B, .)$ is an associative algebra.
\end{theorem}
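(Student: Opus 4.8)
The plan is to verify associativity directly by expanding both $((a_1\otimes b_1)\cdot(a_2\otimes b_2))\cdot(a_3\otimes b_3)$ and $(a_1\otimes b_1)\cdot((a_2\otimes b_2)\cdot(a_3\otimes b_3))$ and matching them term by term. Each product of two elements produces three summands (one for each of $\prec$, $\succ$, $\cdot$ in $B$, paired with the corresponding $\ast$, $\ast$-flipped, or $\bullet$ in $A$), so each triple product yields nine terms. The core strategy is to group these nine terms on each side according to the shape of the $B$-word they produce under the tridendriform operations, and then to check that the matching $A$-coefficients agree using the Perm, commutativity, and compatibility laws of the commutative tri-algebra.

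\medskip

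First I would organize the computation by the three defining relations of $B$ that govern $\prec$. For example, the term on the left coming from iterating $\prec$ gives $(b_1\prec b_2)\prec b_3$, which by the first tridendriform identity equals $b_1\prec(b_2\prec b_3+b_2\succ b_3+b_2\cdot b_3)$; these three pieces on the right-hand bracketing must be produced by the appropriate summands of the right-associated product. The key bookkeeping is that whenever $\succ$ is applied in $B$, the $A$-factor carries a \emph{reversed} multiplication $a_i\ast a_j$, so the arguments swap order in $A$; this is exactly where the Perm-identity $x\ast(y\ast z)=x\ast(z\ast y)$ and the compatibility relations $x\ast(y\bullet z)=x\ast(y\ast z)$ and $(x\bullet y)\ast z=x\bullet(y\ast z)$ become essential for reconciling the two bracketings of the $A$-factors.

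\medskip

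I would proceed identity by identity: match the pair of terms governed by $(x\succ y)\prec z=x\succ(y\prec z)$, then the pair governed by the $\succ$ relation, then the four relations involving the middle product $\cdot$, checking in each case that the prescribed $A$-side coefficient (whether $\ast$, its flip, or $\bullet$) is forced to agree by Perm-associativity, commutativity of $\bullet$, or one of the two compatibility laws. A convenient organizing principle is that the total product $\circ$ is associative in $B$, so the nine terms partition cleanly once one tracks which $A$-operation each $B$-operation is coupled to.

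\medskip

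The main obstacle I anticipate is the terms mixing the middle operation $\cdot$ with $\succ$ or $\prec$, governed by the identities $x\succ(y\cdot z)=(x\succ y)\cdot z$, $(x\prec y)\cdot z=x\cdot(y\succ z)$, and $(x\cdot y)\prec z=x\cdot(y\prec z)$: here the $A$-factor switches between the Perm product $\ast$ (possibly flipped) and the commutative product $\bullet$, so one must show that the two compatibility relations precisely interchange $\ast$ and $\bullet$ in the way dictated by these $B$-identities. Verifying that every such cross-term on the left has a unique partner on the right with an equal $A$-coefficient—using commutativity of $\bullet$ to absorb any order swaps introduced by the flipped $\ast$—is the delicate heart of the argument; the remaining cases are routine once this coupling dictionary is established.
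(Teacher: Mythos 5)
Your proposal is correct and follows essentially the same route as the paper: expand both bracketings into nine terms, group them by the seven tridendriform identities in $B$ (the one-to-three matchings for $(b_1\prec b_2)\prec b_3$ and $(\,\cdot\,)\succ b_3$, the pair for $(b_1\succ b_2)\prec b_3$, and the four middle-product relations), and reconcile the $A$-coefficients via the Perm identity, commutativity of $\bullet$, and the two compatibility laws. The coupling dictionary you describe—flipped $\ast$ with $\succ$, $\bullet$ with $\cdot$, and the compatibility relations mediating the mixed terms—is exactly the matching the paper carries out term by term.
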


\begin{proof}
We prove that the product on \(A \otimes B\) is associative by verifying:
\[
\big((a_1 \otimes b_1) \cdot (a_2 \otimes b_2)\big) \cdot (a_3 \otimes b_3) = (a_1 \otimes b_1) \cdot \big((a_2 \otimes b_2) \cdot (a_3 \otimes b_3)\big)
\]
For \(x = a_1 \otimes b_1\) and \(y = a_2 \otimes b_2\), the product is:
\[
x \cdot y = (a_1 \ast a_2) \otimes (b_1 \prec b_2) + (a_2 \ast a_1) \otimes (b_1 \succ b_2) + (a_1 \bullet a_2) \otimes (b_1 \cdot b_2)
\]
\textbf{ Expansion of \((x \cdot y) \cdot z\):}
First compute \(x \cdot y\) as above, then multiply by \(z = a_3 \otimes b_3\):
\[
\begin{aligned}
(x \cdot y) \cdot z = &\big[(a_1 \ast a_2) \otimes (b_1 \prec b_2)\big] \cdot (a_3 \otimes b_3) \\
+ &\big[(a_2 \ast a_1) \otimes (b_1 \succ b_2)\big] \cdot (a_3 \otimes b_3) \\
+ &\big[(a_1 \bullet a_2) \otimes (b_1 \cdot b_2)\big] \cdot (a_3 \otimes b_3)
\end{aligned}
\]

Expanding each term using the product definition:

\begin{align*}
= ((a_1 \ast a_2) \ast a_3) \otimes \big((b_1 \prec b_2) \prec b_3\big) \quad (L1)\\
+ (a_3 \ast (a_1 \ast a_2)) \otimes \big((b_1 \prec b_2) \succ b_3\big) \quad (L2)\\
+ ((a_1 \ast a_2) \bullet a_3) \otimes \big((b_1 \prec b_2) \cdot b_3\big) \quad (L3)\\
+ ((a_2 \ast a_1) \ast a_3) \otimes \big((b_1 \succ b_2) \prec b_3\big) \quad (L4)\\
+ (a_3 \ast (a_2 \ast a_1)) \otimes \big((b_1 \succ b_2) \succ b_3\big) \quad (L5)\\
+ ((a_2 \ast a_1) \bullet a_3) \otimes \big((b_1 \succ b_2) \cdot b_3\big) \quad (L6)\\
+((a_1 \bullet a_2) \ast a_3) \otimes \big((b_1 \cdot b_2) \prec b_3\big) \quad (L7)\\
+ (a_3 \ast (a_1 \bullet a_2)) \otimes \big((b_1 \cdot b_2) \succ b_3\big) \quad (L8)\\
+ ((a_1 \bullet a_2) \bullet a_3) \otimes \big((b_1 \cdot b_2) \cdot b_3\big) \quad (L9)
\end{align*}

\textbf{ Expansion of \(x \cdot (y \cdot z)\):}
First compute \(y \cdot z\):
\[
y \cdot z = (a_2 \ast a_3) \otimes (b_2 \prec b_3) + (a_3 \ast a_2) \otimes (b_2 \succ b_3) + (a_2 \bullet a_3) \otimes (b_2 \cdot b_3)
\]

Then multiply by \(x = a_1 \otimes b_1\):
\[
\begin{aligned}
x \cdot (y \cdot z) = &(a_1 \otimes b_1) \cdot \big[(a_2 \ast a_3) \otimes (b_2 \prec b_3)\big] \\
+ &(a_1 \otimes b_1) \cdot \big[(a_3 \ast a_2) \otimes (b_2 \succ b_3)\big] \\
+ &(a_1 \otimes b_1) \cdot \big[(a_2 \bullet a_3) \otimes (b_2 \cdot b_3)\big]
\end{aligned}
\]

Expanding each term:

\begin{align*}
= (a_1 \ast (a_2 \ast a_3)) \otimes \big(b_1 \prec (b_2 \prec b_3)\big) \quad (R1)\\
+ ((a_2 \ast a_3) \ast a_1) \otimes \big(b_1 \succ (b_2 \prec b_3)\big) \quad (R2)\\
+ (a_1 \bullet (a_2 \ast a_3)) \otimes \big(b_1 \cdot (b_2 \prec b_3)\big) \quad (R3)\\
+ (a_1 \ast (a_3 \ast a_2)) \otimes \big(b_1 \prec (b_2 \succ b_3)\big) \quad (R4)\\
+ ((a_3 \ast a_2) \ast a_1) \otimes \big(b_1 \succ (b_2 \succ b_3)\big) \quad (R5)\\
+ (a_1 \bullet (a_3 \ast a_2)) \otimes \big(b_1 \cdot (b_2 \succ b_3)\big) \quad (R6)\\
+ (a_1 \ast (a_2 \bullet a_3)) \otimes \big(b_1 \prec (b_2 \cdot b_3)\big) \quad (R7)\\
+ ((a_2 \bullet a_3) \ast a_1) \otimes \big(b_1 \succ (b_2 \cdot b_3)\big) \quad (R8)\\
+ (a_1 \bullet (a_2 \bullet a_3)) \otimes \big(b_1 \cdot (b_2 \cdot b_3)\big) \quad (R9)
\end{align*}

We'll now match corresponding terms using the axioms:
\begin{itemize}
    \item {Matching \((L1)\) and \((R1 + R4+R7)\)}
    
-By tri-dendriform axiom:
$
(b_1 \prec b_2) \prec b_3 = b_1 \prec (b_2 \prec b_3 + b_2 \succ b_3+b_2 \cdot b_3)
$

-By commutative tri-algebra axiom:
$
(a_1 \ast a_2) \ast a_3 = a_1 \ast (a_2 \ast a_3) = a_1 \ast (a_3 \ast a_2)=a_1 \ast (a_2 \bullet a_3)
$

\item {Matching \((L2+L5+L8)\) and \((R5)\)}

-By tri-dendriform axiom: $
  (b_1 \prec b_2 + b_1 \succ b_2+b_1 \cdot b_2)\succ b_3=b_1 \succ (b_2 \succ b_3)
$

-By commutative tri-algebra axiom:
$
(a_3 \ast a_2) \ast a_1 = a_3 \ast (a_2 \ast a_1) = a_3 \ast (a_1 \ast a_2)=a_3 \ast (a_1 \bullet a_2)
$

\item {Matching \((L3)\) and \((R6)\)}

-By tri-dendriform axiom: $
  (b_1 \prec b_2)\cdot b_3 =b_1 \cdot (b_2 \succ b_3) 
$

-By commutative tri-algebra axiom:
$
(a_1\ast a_2)\bullet a_3=a_1\bullet (a_3\ast a_2)
$

\item {Matching \((L4)\) and \((R2)\)}

-By tri-dendriform axiom: $
  (b_1 \succ b_2)\prec b_3 =b_1 \succ (b_2 \prec b_3) 
$

-By commutative tri-algebra axiom:
$
(a_2\ast a_1)\ast a_3=(a_2\ast a_3) \ast a_1
$

\item {Matching \((L6)\) and \((R8)\)}

-By tri-dendriform axiom: $
  (b_1 \succ b_2)\cdot b_3 =b_1 \succ (b_2 \cdot b_3) 
$

-By commutative tri-algebra axiom:
$
(a_2\ast a_1)\ast a_3=(a_2\ast a_3) \ast a_1=a_2*(a_3 \bullet a_1)
$

\item {Matching \((L7)\) and \((R3)\)}

-By tri-dendriform axiom: $
  b_1 \cdot (b_2\prec b_3) =(b_1 \cdot b_2) \prec b_3 
$

-By commutative tri-algebra axiom:
$
(a_1\bullet a_2)\ast a_3=a_1\bullet (a_2 \ast a_3)
$

\item {Matching \((L9)\) and \((R9)\)}

-By tri-dendriform axiom: $
  b_1 \cdot (b_2\cdot b_3) =(b_1 \cdot b_2) \cdot b_3 
$

-By commutative tri-algebra axiom:
$
(a_1\bullet a_2)\bullet a_3=a_1\bullet (a_2 \bullet a_3)
$
\end{itemize}

\end{proof}

\begin{theorem}\label{5.4}
Let $(A, \ast, \bullet)$ be a free commutative tri- algebra and $(B, \succ, \prec, \cdot)$ be a tri-dendriform algebra. The product on $A \otimes B$:
\[
(a_1 \otimes b_1) (a_2 \otimes b_2) = (a_1 \ast a_2) \otimes (b_1 \prec b_2) + (a_2 \ast a_1) \otimes (b_1 \succ b_2) + (a_1 \bullet a_2) \otimes (b_1 \cdot b_2)
\]
 is an associative. Let $N$ is a $B$-module. Then there is injective cochain map: 
\[
\Psi:
C^*_{\mathrm{tri}}(B,N) \hookrightarrow C^*_{\mathrm{Hoch}}(A\otimes B,A\otimes N),
\]

which sends a tri-dendriform cocycle class  to Hochschild cocycle class.
\end{theorem}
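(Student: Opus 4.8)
The plan is to transfer the combinatorial identity underlying the associativity of $A \otimes B$ (the previous theorem) up one homological degree, so that the very relations which forced the product to be associative now force $\Psi$ to intertwine the two differentials. I would first fix explicit models for the two complexes. On the tri-dendriform side, an $n$-cochain is a tuple $f = (f_T)_T$ of multilinear maps $f_T \colon B^{\otimes n} \to N$ indexed by the admissible operation patterns $T$ in arity $n$, i.e.\ the combinatorial data recording in which of the three slots $\prec,\succ,\cdot$ each internal product sits; the differential $d_{\mathrm{tri}}$ is the alternating sum in which each product of $B$ acts on $N$ through all three of its operations. On the Hochschild side I take $C^n_{\mathrm{Hoch}}(A\otimes B, A\otimes N) = \Hom((A\otimes B)^{\otimes n}, A\otimes N)$ with the standard differential built from the associative product of the previous theorem.

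Next I would define $\Psi$ by the combinatorial sum
\[
\Psi(f)(a_1\otimes b_1,\dots,a_n\otimes b_n) = \sum_{T} a_T(a_1,\dots,a_n)\otimes f_T(b_1,\dots,b_n),
\]
where, for each type $T$, the element $a_T(a_1,\dots,a_n)\in A$ is the monomial obtained by composing $\ast$ and $\bullet$ on $a_1,\dots,a_n$ in exactly the pattern that $T$ prescribes for $\prec,\succ,\cdot$ on the $b$'s: a left slot $\prec$ contributes $\ast$ with the new argument on the right, a right slot $\succ$ contributes $\ast$ with the two arguments transposed, and a middle slot $\cdot$ contributes $\bullet$, precisely mirroring the three summands of the product $(a_1\otimes b_1)(a_2\otimes b_2)$ in the previous theorem.

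The core step is the verification $d_{\mathrm{Hoch}}\circ\Psi = \Psi\circ d_{\mathrm{tri}}$, which I would establish by expanding both sides on a tuple $a_0\otimes b_0,\dots,a_n\otimes b_n$. Each internal multiplication $X_{i-1}X_i$ appearing in the Hochschild differential splits, by definition of the product, into the three operations $\prec,\succ,\cdot$ tensored against $\ast,\ast,\bullet$ respectively, and the outer terms $X_0\cdot(-)$ and $(-)\cdot X_n$ split the same way. After expansion every summand carries an $A$-monomial in the $a_i$ together with an $N$-valued expression in the $b_i$. I would then show that the $A$-monomials collate the terms into groups in which the commutative tri-algebra axioms (the Perm identity, commutativity and associativity of $\bullet$, and the two compatibility laws) collapse the $A$-factors to a single monomial $a_T$, while the matching tri-dendriform axioms simultaneously collapse the $B$-factors to the corresponding component $(d_{\mathrm{tri}}f)_T$. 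This is exactly the one-degree-higher analogue of the seven matchings $(L)\leftrightarrow(R)$ of the previous theorem, and the genuine work lies in the bookkeeping: aligning the signs of the two differentials with the type-indexing and checking that no spurious cross terms survive.

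Injectivity then follows from the freeness of $A$: because $A$ is the free commutative tri-algebra, the monomials $a_T$ attached to distinct types $T$ are linearly independent (no relation beyond the defining axioms identifies them), so the summands $a_T\otimes f_T(b_1,\dots,b_n)$ occupy independent lines of $A\otimes N$. Hence $\Psi(f)=0$ forces every $f_T=0$, giving injectivity in each degree; and since $\Psi$ is a cochain map it carries cocycles to cocycles and coboundaries to coboundaries, so it descends to cohomology classes as claimed. Freeness is thus precisely the ingredient, absent from the previous theorem, that keeps the three operadic channels separated, and confirming this independence of the type-monomials is the one structural point, beyond the term-matching, that the argument truly requires.
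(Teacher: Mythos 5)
Your proposal is correct and takes essentially the same route as the paper's proof: your $\Psi(f)=\sum_T a_T\otimes f_T$, with $\prec\mapsto\ast$, $\succ\mapsto\ast$ (arguments transposed) and $\cdot\mapsto\bullet$, is exactly the paper's $\sum_I P(a_1,\dots,a_n;I)\otimes g(I;b_1,\dots,b_n)$ over nonempty subsets $I\subseteq\{1,\dots,n\}$, the cochain-map identity is established by the same expansion of $\delta_{\mathrm{HH}}$ into the three channels of the product followed by collapsing the $A$-parts with the tri-algebra identities and matching the $B$-parts against $\delta_{\mathrm{tri}}$, and injectivity is drawn from freeness of $A$ just as in the paper. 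The one point to pin down is that your ``types'' must be the subset-indexed normal forms $(a_{i_1}\bullet\cdots\bullet a_{i_k})\ast(a_{j_1}\ast\cdots\ast a_{j_{n-k}})$ rather than arbitrary operation patterns, since the Perm identity $x\ast(y\ast z)=x\ast(z\ast y)$ identifies the monomials mirroring distinct patterns (e.g.\ $b_1\prec(b_2\prec b_3)$ and $b_1\prec(b_2\succ b_3)$ both yield $a_1\ast(a_2\ast a_3)$), so the linear independence you invoke for injectivity holds only for the subset basis, which is the indexing the paper uses.
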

\begin{proof}
     Given a tri-dendriform  cochain $g \in C^n_{\mathrm{dend}}(B, N)$, for $a_1,\ldots,a_n\in A$ and $b_1,\ldots,b_n\in B$, define:
\[
(\Psi g)(a_1 \otimes b_1, \dots, a_n \otimes b_n) = \sum_{k=1}^n \sum_{1 \le i_1 < \dots < i_k \le n} P(a_1, \dots, a_n; i_1, \dots, i_k) \otimes g((i_1, \dots, i_k), b_1, \dots, b_n),
\]

where the element \(P(a_1, \dots, a_n; i_1, \dots, i_k) \in A\) is constructed as follows:

-  \(I = \{i_1, \dots, i_k\}\)  with \(i_1 < \dots < i_{k}\).

- \(J = \{1, \dots, n\} \setminus I = \{j_1, \dots, j_{n-k}\}\).

-
  \[
  P(a_1, \dots, a_n; I) = (a_{i_1} \bullet \cdots \bullet a_{i_k}) \ast (a_{j_1} \ast \cdots \ast a_{j_{n-k}}).
  \]

Let us to show that $\Psi$ commutes with the  differentials:
\[
\delta_{HH}(\Psi f)=\Psi(\delta_{dend}f)
\]
For $n=2$, we have:

\[
(\Psi g)(a_1 \otimes b_1) = P(a_1; 1) \otimes g(1, b_1).
\]
Here \(I = \{1\}\), \(J = \emptyset\), so \(P(a_1; 1) = a_1\) (since \(\bullet\)-product of one element is itself, and \(\ast\)-product of empty set is omitted).

So:
\[
(\Psi g)(a_1 \otimes b_1) = a_1 \otimes g(1, b_1).
\]

Apply \(\delta_{\mathrm{HH}}\):

\[
(\delta_{\mathrm{HH}} (\Psi g))(x_1, x_2) = \underbrace{x_1 \cdot (\Psi g)(x_2)}_{T_0} - \underbrace{(\Psi g)(x_1 x_2)}_{T_1} + \underbrace{(\Psi g)(x_1) \cdot x_2}_{T_2},
\]
where \(x_i = a_i \otimes b_i\).

\textbf{1.Term $(T_0)$: $x_1  (\Psi g)(x_2)$}
\begin{align*}
&=(a_1 \otimes b_1)(\Psi g)(a_2\otimes b_2)\\
&=(a_1 \otimes b_1)(a_2\otimes g(1, b_2)\\
&= (a_1 \ast a_2) \otimes (b_1 \prec g(1, b_2)) \;+\; (a_2 \ast a_1) \otimes (b_1 \succ g(1, b_2)) \;+\; (a_1 \bullet a_2) \otimes (b_1 \cdot g(1, b_2)).
\end{align*}

\textbf{2. Term $(T_1)$: \(-(\Psi g)(x_1 x_2)\)}
\begin{align*}
    &=-(\Psi g)\big((a_1\otimes b_1)(a_2 \otimes b_2)\big)\\
    &=-(\Psi g)\big((a_1\ast a_2)\otimes (b_1 \prec b_2)+(a_2\ast a_1)\otimes (b_1 \succ b_2)+(a_1\bullet a_2)\otimes (b_1. b_2)\big)\\
    &=-(a_1\ast a_2)\otimes g(1,b_1\prec b_2)-(a_2\otimes a_)\otimes g(1,b_1\succ b_2)-(a_1\o a_2)\otimes g(1,b_1\prec b_2)
\end{align*}

\textbf{3.Term $(T_2)$: $(\Psi g)(x_1)x_2$}
\begin{align*}
& = g(a_1\otimes b_1)(a_2\otimes b_2)\\
&=(a_1\otimes g(1,b_1))(a_2\otimes b_2)\\
&=(a_1\ast a_2)\otimes (g(1,b_1)\prec b_2)+(a_2\ast a_1)\otimes (g(1,b_1)\succ b_2)+(a_1\bullet a_2)\otimes (g(1,b_1). b_2)
\end{align*}

So:
\[
\begin{aligned}
\delta_{\mathrm{HH}}(\Psi g)(x_1, x_2) = &\ (a_1 \ast a_2) \otimes \big[ b_1 \prec g(1, b_2) - g(1, b_1 \prec b_2) + g(1, b_1) \prec b_2 \big] \\
+ &\ (a_2 \ast a_1) \otimes \big[ b_1 \succ g(1, b_2) - g(1, b_1 \succ b_2) + g(1, b_1) \succ b_2 \big] \\
+ &\ (a_1 \bullet a_2) \otimes \big[ b_1 \cdot g(1, b_2) - g(1, b_1 \cdot b_2) + g(1, b_1) \cdot b_2 \big]\\
&= \sum_{I \subset \{1,2\}, |I|\ge 1} P(a_1, a_2; I) \otimes (\delta_{\mathrm{tri}} g)(I; b_1, b_2)..
\end{aligned}
\]

- \(I = \{1\}\): \(P = a_1 \ast a_2\), \(\delta_{\mathrm{tri}} g\) as above  matches first line.

- \(I = \{2\}\): \(P = a_2 \ast a_1\), matches second line.

- \(I = (1,2)\): \(P = a_1 \bullet a_2\), matches third line.
Thus:
\[
\delta_{\mathrm{HH}}(\Psi g) = \Psi(\delta_{\mathrm{tri}} g) \quad \text{for } n=2.
\]
For $n=3$, we have:

\[
(\Psi g)(a_1 \otimes b_1, a_2 \otimes b_2)
= \sum_{k=1}^2 \sum_{1 \le i_1 < \dots < i_k \le 2} P(a_1, a_2; i_1, \dots, i_k) \otimes g((i_1, \dots, i_k), b_1, b_2).
\]

Possible \(I\):

- \(|I|=1\): \(I = \{1\}, \{2\}\)

- \(|I|=2\): \(I = \{1,2\}\)

Compute \(P\) for each:

- \(I = \{1\}\): \(J = \{2\}\),  
\(P = a_1 \ast a_2 \)

- \(I = \{2\}\): \(J = \{1\}\),  
\(P = a_2 \ast a_1 \)  

- \(I = \{1,2\}\): \(J = \emptyset \),  
\(P = a_1 \bullet a_2\)

So:
\[
\begin{aligned}
(\Psi g)(a_1\otimes b_1, a_2\otimes b_2) = &\ (a_1 \ast a_2 ) \otimes g(1; b_1, b_2) \\
+ &(a_2 \ast a_1) \otimes g(2; b_1, b_2) \\
+ & (a_1 \bullet a_2) \otimes g((1,2); b_1, b_2).
\end{aligned}
\]

Apply \(\delta_{\mathrm{HH}}\):

\[
(\delta_{\mathrm{HH}} (\Psi g))(x_1, x_2,x_3) = \underbrace{x_1 \cdot (\Psi g)(x_2,x_3)}_{T_0} - \underbrace{(\Psi g)(x_1 x_2,x_3)}_{T_1} +\underbrace{(\Psi g)(x_1, x_2x_3)}_{T_2}- \underbrace{(\Psi g)(x_1,x_2) \cdot x_3}_{T_3},
\]
where \(x_i = a_i \otimes b_i\).

\textbf{1.Term $(T_0)$: $x_1  (\Psi g)(x_2,x_3)$}
\[
\Psi(g)(x_2,x_3) = (a_2 \ast a_3) \otimes g(1,b_2,b_3) + (a_3 \ast a_2) \otimes g(2,b_2,b_3) + (a_2 \bullet a_3) \otimes g((1,2),b_2,b_3)
\]

Multiply by \(x = a_1 \otimes b_1\):

\[
\begin{aligned}
x_1  \Psi(g)(x_2,x_3) = &\ (a_1 \ast (a_2 \ast a_3)) \otimes (b_1 \prec g(1,b_2,b_3)) \\
+ &\ ((a_2 \ast a_3) \ast a_1) \otimes (b_1 \succ g(1,b_2,b_3)) \\
+ &\ (a_1 \bullet (a_2 \ast a_3)) \otimes (b_1 \cdot g(1,b_2,b_3)) \\
+ &\ (a_1 \ast (a_3 \ast a_2)) \otimes (b_1 \prec g(2,b_2,b_3)) \\
+ &\ ((a_3 \ast a_2) \ast a_1) \otimes (b_1 \succ g(2,b_2,b_3)) \\
+ &\ (a_1 \bullet (a_3 \ast a_2)) \otimes (b_1 \cdot g(2,b_2,b_3)) \\
+ &\ (a_1 \ast (a_2 \bullet a_3)) \otimes (b_1 \prec g((1,2),b_2,b_3)) \\
+ &\ ((a_2 \bullet a_3) \ast a_1) \otimes (b_1 \succ g((1,2),b_2,b_3)) \\
+ &\ (a_1 \bullet (a_2 \bullet a_3)) \otimes (b_1 \cdot g((1,2),b_2,b_3))
\end{aligned}
\]

\textbf{2. Term $(T_1)$: \(-\Psi(g)(x_1x_2, x_3)\)}

Compute \(x_1x_2\):

\[
x_1x_2= (a_1 \ast a_2) \otimes (b_1 \prec b_2) + (a_2 \ast a_1) \otimes (b_1 \succ b_2) + (a_1 \bullet a_2) \otimes (b_1 \cdot b_2)
\]

Apply \(\Psi(g)\):

\[
\begin{aligned}
-\Psi(g)(x_1x_2, x_3) = &\ - (a_1 \ast a_2 \ast a_3) \otimes g(1, b_1 \prec b_2, b_3) \\
&\ - (a_3 \ast (a_1 \ast a_2)) \otimes g(2, b_1 \prec b_2, b_3) \\
&\ - ((a_1 \ast a_2) \bullet a_3) \otimes g((1,2), b_1 \prec b_2, b_3) \\
&\ - (a_2 \ast a_1 \ast a_3) \otimes g(1, b_1 \succ b_2, b_3) \\
&\ - (a_3 \ast (a_2 \ast a_1)) \otimes g(2, b_1 \succ b_2, b_3) \\
&\ - ((a_2 \ast a_1) \bullet a_3) \otimes g((1,2), b_1 \succ b_2, b_3) \\
&\ - ((a_1 \bullet a_2) \ast a_3) \otimes g(1, b_1 \cdot b_2, b_3) \\
&\ - (a_3 \ast (a_1 \bullet a_2)) \otimes g(2, b_1 \cdot b_2, b_3) \\
&\ - ((a_1 \bullet a_2) \bullet a_3) \otimes g((1,2), b_1 \cdot b_2, b_3)
\end{aligned}
\]

\textbf{3. Term $(T_2)$: \(+\Psi(g)(x_1, x_2x_3)\)}

Compute \(x_2x_3\):

\[
x_2x_3 = (a_2 \ast a_3) \otimes (b_2 \prec b_3) + (a_3 \ast a_2) \otimes (b_2 \succ b_3) + (a_2 \bullet a_3) \otimes (b_2 \cdot b_3)
\]

Apply \(\Psi(g)\):

\[
\begin{aligned}
\Psi(g)(x_1, x_2x_3) = &\ (a_1 \ast (a_2 \ast a_3)) \otimes g(1,b_1, b_2 \prec b_3) \\
+ &\ ((a_2 \ast a_3) \ast a_1) \otimes g(2,b_1, b_2 \prec b_3) \\
+ &\ (a_1 \bullet (a_2 \ast a_3)) \otimes g((1,2),b_1, b_2 \prec b_3) \\
+ &\ (a_1 \ast (a_3 \ast a_2)) \otimes g(1,b_1, b_2 \succ b_3) \\
+ &\ ((a_3 \ast a_2) \ast a_1) \otimes g(2,b_1, b_2 \succ b_3) \\
+ &\ (a_1 \bullet (a_3 \ast a_2)) \otimes g((1,2),b_1, b_2 \succ b_3) \\
+ &\ (a_1 \ast (a_2 \bullet a_3)) \otimes g(1,b_1, b_2 \cdot b_3) \\
+ &\ ((a_2 \bullet a_3) \ast a_1) \otimes g(2,b_1, b_2 \cdot b_3) \\
+ &\ (a_1 \bullet (a_2 \bullet a_3)) \otimes g((1,2),b_1, b_2 \cdot b_3)
\end{aligned}
\]

\textbf{4. Fourth Term: \(-\Psi(g)(x_1,x_2) x_3\)}

Compute \(\Psi(g)(x_1,x_2)\):

\[
\Psi(g)(x_1,x_2) = (a_1 \ast a_2) \otimes g(1,b_1,b_2) + (a_2 \ast a_1) \otimes g(2,b_1,b_2) + (a_1 \bullet a_2) \otimes g((1,2),b_1,b_2)
\]

Multiply by \(x_3 = a_3 \otimes b_3\):

\[
\begin{aligned}
- \Psi(g)(x_1,x_2)  x_3 = &\ - (a_1 \ast a_2 \ast a_3) \otimes (g(1,b_1,b_2) \prec b_3) \\
&\ - (a_3 \ast (a_1 \ast a_2)) \otimes (g(1,b_1,b_2) \succ b_3) \\
&\ - ((a_1 \ast a_2) \bullet a_3) \otimes (g(1,b_1,b_2) \cdot b_3) \\
&\ - (a_2 \ast a_1 \ast a_3) \otimes (g(2,b_1,b_2) \prec b_3) \\
&\ - (a_3 \ast (a_2 \ast a_1)) \otimes (g(2,b_1,b_2) \succ b_3) \\
&\ - ((a_2 \ast a_1) \bullet a_3) \otimes (g(2,b_1,b_2) \cdot b_3) \\
&\ - ((a_1 \bullet a_2) \ast a_3) \otimes (g((1,2),b_1,b_2) \prec b_3) \\
&\ - (a_3 \ast (a_1 \bullet a_2)) \otimes (g((1,2),b_1,b_2) \succ b_3) \\
&\ - ((a_1 \bullet a_2) \bullet a_3) \otimes (g((1,2),b_1,b_2) \cdot b_3)
\end{aligned}
\]
Let us compute each term in \(\delta_{HH}(\Psi g)\) one by one.
\[
\begin{aligned}
\delta_{HH}(\Psi(g))\big(x_1 \otimes b_1, x_2 \otimes b_2,x_{3}\otimes b_{3}\big)&=\ (a_1 \ast (a_2 \ast a_3)) \otimes (b_1 \prec g(1,b_2,b_3)) \\
+ &\ (a_1 \ast (a_3 \ast a_2)) \otimes (b_1 \prec g(2,b_2,b_3)) \\
+ &\ (a_1 \ast (a_2 \bullet a_3)) \otimes (b_1 \prec g(3,b_2,b_3)) \\
 & - (a_1 \ast a_2 \ast a_3) \otimes g(1, b_1 \prec b_2, b_3) \\
&+\ (a_1 \ast (a_2 \ast a_3)) \otimes g(1,b_1, b_2 \prec b_3) \\
+ &\ (a_1 \ast (a_3 \ast a_2)) \otimes g(1,b_1, b_2 \succ b_3) \\
+ &\ (a_1 \ast (a_2 \bullet a_3)) \otimes g(1,b_1, b_2 \cdot b_3) \\
 &\ - ((a_1 \ast a_2) \ast a_3) \otimes (g(1,b_1,b_2) \prec b_3) \\
 &\ +((a_2 \ast a_3) \ast a_1) \otimes (b_1 \succ g(1,b_2,b_3)) \\
&\ - (a_2 \ast a_1 \ast a_3) \otimes g(1, b_1 \succ b_2, b_3) \\
+ &\ ((a_2 \ast a_3) \ast a_1) \otimes g(2,b_1, b_2 \prec b_3) \\
&\ - (a_2 \ast a_1 \ast a_3) \otimes (g(2,b_1,b_2) \prec b_3) \\
+ &\ ((a_3 \ast a_2) \ast a_1) \otimes (b_1 \succ g(2,b_2,b_3)) \\
&\ - (a_3 \ast (a_1 \ast a_2)) \otimes g(2, b_1 \prec b_2, b_3) \\
\end{aligned}
\]
\[
\begin{aligned}
&\ - (a_3 \ast (a_2 \ast a_1)) \otimes g(2, b_1 \succ b_2, b_3) \\
&\ - (a_3 \ast (a_1 \bullet a_2)) \otimes g(2, b_1 \cdot b_2, b_3) \\
+ &\ ((a_3 \ast a_2) \ast a_1) \otimes g(2,b_1, b_2 \succ b_3) \\
&\ - (a_3 \ast (a_1 \ast a_2)) \otimes (g(1,b_1,b_2) \succ b_3) \\
&\ - (a_3 \ast (a_2 \ast a_1)) \otimes (g(2,b_1,b_2) \succ b_3) \\
&\ - (a_3 \ast (a_1 \bullet a_2)) \otimes (g(3,b_1,b_2) \succ b_3) \\
+ &\ (a_1 \bullet (a_2 \ast a_3)) \otimes (b_1 \cdot g(1,b_2,b_3)) \\
&-((a_1 \bullet a_2) \ast a_3) \otimes g(1, b_1 \cdot b_2, b_3) \\
&+ \ (a_1 \bullet (a_2 \ast a_3)) \otimes g(3,b_1, b_2 \prec b_3) \\
&\ - ((a_1 \bullet a_2) \ast a_3) \otimes (g(3,b_1,b_2) \prec b_3) \\
&+ \ (a_1 \bullet (a_3 \ast a_2)) \otimes (b_1 \cdot g(2,b_2,b_3)) \\
&\ - ((a_1 \ast a_2) \bullet a_3) \otimes g(3, b_1 \prec b_2, b_3) \\
&+ \ (a_1 \bullet (a_3 \ast a_2)) \otimes g(3,b_1, b_2 \succ b_3) \\
&\ - ((a_1 \ast a_2) \bullet a_3) \otimes (g(1,b_1,b_2) \cdot b_3) \\
+ &\ ((a_2 \bullet a_3) \ast a_1) \otimes (b_1 \succ g(3,b_2,b_3)) \\
& - ((a_2 \ast a_1) \bullet a_3) \otimes g(3, b_1 \succ b_2, b_3) \\
+ & ((a_2 \bullet a_3) \ast a_1) \otimes g(2,b_1, b_2 \cdot b_3) \\
& - ((a_2 \ast a_1) \bullet a_3) \otimes (g(2,b_1,b_2) \cdot b_3) \\
+ &\ (a_1 \bullet (a_2 \bullet a_3)) \otimes (b_1 \cdot g(3,b_2,b_3))\\
&- ((a_1 \bullet a_2) \bullet a_3) \otimes g(3, b_1 \cdot b_2, b_3)\\
+ &\ (a_1 \bullet (a_2 \bullet a_3)) \otimes g(3,b_1, b_2 \cdot b_3)\\
- &\ (a_1 \bullet a_2) \bullet a_3 \otimes g(3,b_1, b_2 ) b_3)
\end{aligned}
\]
By using the tri-algebra identities:
\[
\begin{aligned}
    &-a_1*(a_2*a_3)=a_1*(a_3*a_2)=(a_1*a_2)*a_3=a_1*(a_2\bullet a_3)\\
    &-a_2*(a_1*a_3)=a_2*(a_3*a_1)=(a_2*a_1)*a_3\\
    \end{aligned}
\]
\[
\begin{aligned}
    &-a_3*(a_2*a_1)=a_3*(a_1*a_2)=(a_3*a_2)*a_1=a_3*(a_1\bullet a_2)\\
      &-(a_1\bullet a_2)\ast a_3=a_1\bullet (a_2\ast a_3)\\
    &-a_1\bullet (a_3\ast a_2)=(a_1\ast a_2)\bullet a_3\\
    &-(a_2\bullet a_3) \ast a_1=(a_2 \ast a_1)\bullet a_3\\
    &-(a_1\bullet a_2)\bullet a_3=a_1\bullet (a_2\bullet a_3)\\
\end{aligned}
\]
we get:
\[
\begin{aligned}
    \delta_{HH}(\Psi(g))\big(x_1 \otimes b_1, x_2 \otimes b_2,x_{3}\otimes b_{3}\big)&=\ (a_1 \ast (a_2 \ast a_3)) \otimes \Big(b_1 \prec g(1,b_2,b_3) \\
& +b_1 \prec g(2,b_2,b_3) +b_1 \prec g(3,b_2,b_3) \\
 & -  g(1, b_1 \prec b_2, b_3) + g(1,b_1, b_2 \prec b_3) \\
 &+ g(1,b_1, b_2 \succ b_3) +  g(1,b_1, b_2 \cdot b_3) \\
 &- g(1,b_1,b_2) \prec b_3\Big) \\
  &\ +((a_2 \ast a_1) \ast a_3) \otimes \Big(b_1 \succ g(1,b_2,b_3) \\
& - g(1, b_1 \succ b_2, b_3) + g(2,b_1, b_2 \prec b_3) \\
&-  g(2,b_1,b_2) \prec b_3\Big) \\
+ &\ ((a_3 \ast a_2) \ast a_1) \otimes \Big(b_1 \succ g(2,b_2,b_3) \\
&- g(2, b_1 \prec b_2, b_3)-  g(2, b_1 \succ b_2, b_3) \\
&-  g(2, b_1 \cdot b_2, b_3) + g(2,b_1, b_2 \succ b_3) \\
& -g(1,b_1,b_2) \succ b_3  - g(2,b_1,b_2) \succ b_3  \\
&- g(3,b_1,b_2) \succ b_3\Big) \\
 &+((a_1 \bullet a_2) \ast a_3)) \otimes \Big(b_1 \cdot g(1,b_2,b_3)\\
&- g(1, b_1 \cdot b_2, b_3) 
+ g(3,b_1, b_2 \prec b_3) \\
& - g(3,b_1,b_2) \prec b_3\Big) \\
+ & ((a_1 \bullet a_3) \ast a_2) \otimes \Big(b_1 \cdot g(2,b_2,b_3) \\
&- g(3, b_1 \prec b_2, b_3) + g(3,b_1, b_2 \succ b_3) \\
& -  g(1,b_1,b_2) \cdot b_3\Big) \\
\end{aligned}
\]
\[
\begin{aligned}
&+ ((a_2 \bullet a_3) \ast a_1) \otimes \Big(b_1 \succ g(3,b_2,b_3) \\
&\ - g(3, b_1 \succ b_2, b_3) +g(2,b_1, b_2 \cdot b_3) \\
& -  (g(2,b_1,b_2) \cdot b_3\Big) \\
+ &\ (a_1 \bullet (a_2 \bullet a_3)) \otimes \Big(b_1 \cdot g(3,b_2,b_3)\\
&-  g(3, b_1 \cdot b_2, b_3)+ \otimes g(3,b_1, b_2 \cdot b_3)\\
&-g(3,b_1, b_2) \cdot b_3\Big)\\
&  = (a_1 \ast a_2 \ast a_3) \otimes \Psi(\delta_{tri}g)(1,b_1, b_2) \\
&+ (a_2 \ast a_1 \ast a_3) \otimes \Psi(\delta_{tri}g)(2,b_1, b_2)\\
&+ (a_3 \ast a_2 \ast a_1) \otimes \Psi(\delta_{tri}g)(3,b_1, b_2,b_3)\\
&+ ((a_1 \bullet a_2) \ast a_3) \otimes \Psi(\delta_{tri}g)(4,b_1, b_2,b_3)\\
&+ ((a_1 \bullet a_3) \ast a_2) \otimes \Psi(\delta_{tri}g)(5,b_1, b_2,b_3)\\
&+ ((a_2 \bullet a_3) \ast a_1) \otimes \Psi(\delta_{tri}g)(6,b_1, b_2,b_3)\\
&+(a_1 \bullet a_2 \bullet a_3) \otimes \Psi(\delta_{tri}g)(7,b_1, b_2,b_3)\\
&=\Psi (\delta_{tri}g)(a_1 \otimes b_1,a_2 \otimes b_2,a_3\otimes b_3)
\end{aligned}
\]
Therefore:
\[
\delta_{\mathrm{HH}}(\Psi g) = \Psi(\delta_{\mathrm{tri}} g)
\]

In general case: For \(g \in C_{\mathrm{tridend}}^n(B, B)\), we define the Hochschild cochain \(\Psi g\) by its action on elementary tensors \(x_j = a_j \otimes b_j\):

\[
(\Psi g)(a_1 \otimes b_1, \dots, a_n \otimes b_n) = \sum_{k=1}^n \sum_{1 \le i_1 < \dots < i_k \le n} P(a_1, \dots, a_n; i_1, \dots, i_k) \otimes g((i_1, \dots, i_k), b_1, \dots, b_n),
\]

where the element \(P(a_1, \dots, a_n; i_1, \dots, i_k) \in A\) is constructed as follows:

-  \(I = \{i_1, \dots, i_k\}\)  with \(i_1 < \dots < i_{k}\).

- \(J = \{1, \dots, n\} \setminus I = \{j_1, \dots, j_{n-k}\}\).

-
  \[
  P(a_1, \dots, a_n; I) = (a_{i_1} \bullet \cdots \bullet a_{i_k}) \ast (a_{j_1} \ast \cdots \ast a_{j_{n-k}}).
  \]

 The Hochschild Differential \(\delta_{\mathrm{HH}}(\Psi g)\) is:
\[
\begin{aligned}
(\delta_{\mathrm{HH}} (\Psi g))(x_1, \dots, x_{n+1}) = &\ x_1 \cdot (\Psi g)(x_2, \dots, x_{n+1}) \\
&+ \sum_{i=1}^n (-1)^i (\Psi g)(x_1, \dots, x_i x_{i+1}, \dots, x_{n+1}) \\
&+ (-1)^{n+1} (\Psi g)(x_1, \dots, x_n) \cdot x_{n+1},
\end{aligned}
\]
where \(x_j = a_j \otimes b_j\).

 We write:
\[
\delta_{\mathrm{HH}}(\Psi g)(x_1, \dots, x_{n+1}) = T_0 + \sum_{i=1}^n T_i + T_{n+1}.
\]

\textbf{ Term \(T_0\):}

\[
T_0 = x_1 \cdot (\Psi g)(x_2, \dots, x_{n+1}).
\]

First, compute the inner part:
\[
(\Psi g)(a_2 \otimes b_2, \dots, a_{n+1} \otimes b_{n+1}) = \sum_{k=1}^{n} \sum_{2 \le i_1 < \dots < i_k \le n+1} P(a_2, \dots, a_{n+1}; i_1, \dots, i_k) \otimes g((i_1, \dots, i_k), b_2, \dots, b_{n+1}).
\]
Here, the indices \(i_1, \dots, i_k\) refer to positions in the \(n\)-tuple \((a_2, \dots, a_{n+1})\).

Let \(P' = P(a_2, \dots, a_{n+1}; I')\)  \(I' = \{i_1, \dots, i_k\} \subset \{2, \dots, n+1\}\). Using the definition of the product in $A\otimes B$, we get three types of terms :

- Term (1a): \((a_1 \ast P') \otimes (b_1 \prec g(I'; b_2, \dots, b_{n+1}))\)

- Term (1b): \((P' \ast a_1) \otimes (b_1 \succ g(I'; b_2, \dots, b_{n+1}))\)

- Term (1c): \((a_1 \bullet P') \otimes (b_1 \cdot g(I'; b_2, \dots, b_{n+1}))\)

Thus,
\[
\begin{aligned}
T_0 = \sum_{k=1}^{n} \sum_{1 \le i_1 < \dots < i_k \le n} \bigg[ &(a_1 \ast P') \otimes (b_1 \prec g(I'; b_2, \dots, b_{n+1})) \\
+ &(P' \ast a_1) \otimes (b_1 \succ g(I'; b_2, \dots, b_{n+1})) \\
+ &(a_1 \bullet P') \otimes (b_1 \cdot g(I'; b_2, \dots, b_{n+1})) \bigg].
\end{aligned}
\]

 \textbf{The Middle Terms \(T_i\) for \(1 \le i \le n\):}

\[
T_i = (-1)^i (\Psi g)(x_1, \dots, x_i x_{i+1}, \dots, x_{n+1}).
\]

The product \(x_i x_{i+1}\) expands as:

- Case 1: \((a_i \ast a_{i+1}) \otimes (b_i \prec b_{i+1})\)

- Case 2: \((a_{i+1} \ast a_i) \otimes (b_i \succ b_{i+1})\)

- Case 3: \((a_i \bullet a_{i+1}) \otimes (b_i \cdot b_{i+1})\)

Let \(y_1, \dots, y_n\) be the list where \(y_j = x_j\) for \(j < i\), \(y_i\) is one of the three cases above, and \(y_j = x_{j+1}\) for \(j > i\).

Then \(T_i = (-1)^i [T_i^{(1)} + T_i^{(2)} + T_i^{(3)}]\), corresponding to the three cases.

For a fixed case \(m \in \{1,2,3\}\) and a subset \(I \subset \{1, \dots, n\}\) (positions in the \(y\)-list), we have:
\[
T_i^{(m)} = \sum_{I \subset \{1, \dots, n\}} P^{(m)}(a_1, \dots, a_{n+1}; I, i) \otimes g(I; b_1, \dots, b_{i-1}, b_i \circ_m b_{i+1}, b_{i+2}, \dots, b_{n+1}),
\]
where:
- \(\circ_1 = \prec\), \(\circ_2 = \succ\), \(\circ_3 = \cdot\)
- \(P^{(m)}\) is the \(A\)-part, which depends on whether \(i \in I\):

  - If \(i \notin I\), then \(y_i\) (which involves \(a_i \ast a_{i+1}\) or \(a_{i+1} \ast a_i\) or \(a_i \bullet a_{i+1}\)) is part of the \(\ast\)-product of the remaining elements.
  
  - If \(i \in I\), then \(y_i\) is one of the factors in the \(\bullet\)-product.

Thus, explicitly:
\[
\begin{aligned}
T_i = (-1)^i \sum_{I \subset \{1, \dots, n\}} \bigg[ &P^{(1)}(a_1, \dots, a_{n+1}; I, i) \otimes g(I; b_1, \dots, b_{i-1}, b_i \prec b_{i+1}, b_{i+2}, \dots, b_{n+1}) \\
+ &P^{(2)}(a_1, \dots, a_{n+1}; I, i) \otimes g(I; b_1, \dots, b_{i-1}, b_i \succ b_{i+1}, b_{i+2}, \dots, b_{n+1}) \\
+ &P^{(3)}(a_1, \dots, a_{n+1}; I, i) \otimes g(I; b_1, \dots, b_{i-1}, b_i \cdot b_{i+1}, b_{i+2}, \dots, b_{n+1}) \bigg].
\end{aligned}
\]

\textbf{ The Last Term \(T_{n+1}\):}

\[
T_{n+1} = (-1)^{n+1} (\Psi g)(x_1, \dots, x_n) \cdot x_{n+1}.
\]

We have:
\[
(\Psi g)(a_1 \otimes b_1, \dots, a_n \otimes b_n) = \sum_{k=1}^{n} \sum_{1 \le i_1 < \dots < i_k \le n} P(a_1, \dots, a_n; i_1, \dots, i_k) \otimes g((i_1, \dots, i_k), b_1, \dots, b_n).
\]

Let \(P'' = P(a_1, \dots, a_n; I)\) for \(I \subset \{1, \dots, n\}\). Multiplying on the right by \(a_{n+1} \otimes b_{n+1}\) gives three types of terms:

- Term ((n+1)a): \((P'' \ast a_{n+1}) \otimes (g(I; b_1, \dots, b_n) \prec b_{n+1})\)

- Term ((n+1)b): \((a_{n+1} \ast P'') \otimes (g(I; b_1, \dots, b_n) \succ b_{n+1})\)

- Term ((n+1)c): \((P'' \bullet a_{n+1}) \otimes (g(I; b_1, \dots, b_n) \cdot b_{n+1})\)

Thus,
\[
\begin{aligned}
T_{n+1} = (-1)^{n+1} \sum_{k=1}^{n} \sum_{1 \le i_1 < \dots < i_k \le n} \bigg[ &(P'' \ast a_{n+1}) \otimes (g(I; b_1, \dots, b_n) \prec b_{n+1}) \\
+ &(a_{n+1} \ast P'') \otimes (g(I; b_1, \dots, b_n) \succ b_{n+1}) \\
+ &(P'' \bullet a_{n+1}) \otimes (g(I; b_1, \dots, b_n) \cdot b_{n+1}) \bigg].
\end{aligned}
\]

The total expression for \(\delta_{\mathrm{HH}}(\Psi g)\) is a sum over many terms of the form $(Element\  of A) \otimes (Element\  of B)$. 
For a fixed \(I = \{i_1, \dots, i_k\} \subset \{1, \dots, n+1\}\), the coefficient of \(P(a_1, \dots, a_{n+1}; I)\) in the \(A\)-part must be exactly \((\delta_{\mathrm{tri}} g)(I; b_1, \dots, b_{n+1})\) in the \(B\)-part by using tri-algebra indentias.

Let's illustrate that:

For \(I = \{1\}\):

From the term \(T_0\), each element of form:
\[
a_1 \ast P(a_2, \cdots, a_{n+1}, i_1, \cdots, i_k)
\]
(for all \(k = 1, \dots, n\) and  \(1 \le i_1 < \dots < i_k \le n\)) equals \(P(a_1, \cdots, a_{n+1}, 1)\).

Therefore, in \((\delta g)(1, b_1, b_2, \cdots, b_{n+1})\), there is a corresponding term for each such combination:
\[
b_1 \prec g((i_1, \cdots, i_k), b_2, \cdots, b_{n+1})
\]
for each \(k = 1, \dots, n\) and each \(1 \le i_1 < \dots < i_k \le n\).

This is why the complete contribution from \(T_0\) appears as the sum over all these possibilities:
\[
b_1 \prec \left( \sum_{k=1}^{n} \sum_{2 \le i_1 < \dots < i_k \le n} g((i_1, \cdots, i_k), b_2, \cdots, b_{n+1}) \right)
\]
in  \((\delta_{tri} g)(1, b_1, b_2, \cdots, b_{n+1})\).

 From \(T_i\):

The contributions from the terms \(T_i\) are separated into two distinct cases based on the index.

Case 1: (\(i = 1\))

The relation is given by:
\[
P(a_1 \ast a_2, a_3, \cdots, a_{n+1}, 1) = P(a_1, a_2, a_3, \cdots, a_{n+1}, 1)
\]
This identity contributes the following single term to the coboundary \((\delta g)(1, b_1, b_2, \cdots, b_{n+1})\):
\[
-g(1, b_1 \prec b_2, b_3, \cdots, b_{n+1})
\]

Case 2: (\(i = 2, \dots, n-1\))

For these internal indices, the contributions come from three separate sums. Each individual term within the following expressions:
\[
\begin{aligned}
&(a) \ -\sum_{i=2}^{n-1} P(a_1,\cdots, a_i\ast a_{i+1},\cdots, a_{n+1},1)\\
&(b) \ -\sum_{i=2}^{n-1} P(a_1,\cdots, a_{i+1}\ast a_{i},\cdots, a_{n+1},1)\\
&(c) \ -\sum_{i=2}^{n-1} P(a_1,\cdots, a_i\bullet a_{i+1},\cdots, a_{n+1},1)
\end{aligned}
\]
is equal to \(P(a_1, a_2, a_3, \cdots, a_{n+1}, 1)\).

Therefore, in the coboundary \((\delta g)(1, b_1, \cdots, b_{n+1})\), this corresponds to the following three sums, where each term in the \(a\)-variable sum has a direct counterpart in the \(b\)-variable expression:
\[
\begin{aligned}
&(-1)^i\sum_{i=2}^{n-1} g(1, b_1,\cdots, b_i \prec b_{i+1}, \cdots, b_{n+1})\\
&(-1)^i\sum_{i=2}^{n-1} g(1, b_1,\cdots, b_i \succ b_{i+1}, \cdots, b_{n+1})\\
&(-1)^i\sum_{i=2}^{n-1} g(1, b_1,\cdots, b_i \cdot b_{i+1}, \cdots, b_{n+1})
\end{aligned}
\]

From \(T_{n+1}\):

The final term \(T_{n+1}\) is defined by the relation:
\[
P(a_1, \cdots, a_n, 1) \ast a_{n+1} = P(a_1, \cdots, a_{n+1}, 1)
\]
This contributes the following term to the coboundary \((\delta g)(1, b_1, \cdots, b_{n+1})\):
\[
g(1, b_1, \cdots , b_n) \prec b_{n+1}
\]

In this case the tri-dendriform differential
may be expressed via:
\[
\begin{aligned}\label{eq:Dend-diff-details}
  (\delta_{tri}g)(1,b_1, \ldots,&  b_{n+1}) =  \sum_{i=1}^na_1\prec f(i,b_2,\ldots,b_{n+1})- f(1,b_1\prec b_2,\ldots,b_{n+1})\\
    &+ \sum_{i=2}^n(-1)^if(1,b_1,\ldots, b_i\circ b_{i+1},\ldots,b_{n+1})+(-1)^{n+1}  f(1,b_1,b_2,\ldots,b_{n})\prec b_{n+1}\\
     \end{aligned}
   \]

For $i_1=2$. We consider the operation in the A-part:

\[
P(a_1, a_2, \dots, a_{n+1}, 2) =a_2 \ast a_1 \ast a_3 \cdots a_{n+1}
\]
Since
\[
\text{A-part operation: } a_2 \ast a_1 \ast a_3 \cdots a_{n+1} \quad \longleftrightarrow \quad \text{B-part coboundary: } (\delta g)(2, b_1, \cdots, b_{n+1})
\]

This, in turn, corresponds precisely to the following expression in the B-part coboundary:

\[
\begin{aligned}
(\delta_{tri} g)(2, b_1, \cdots, b_{n+1}) =&\quad b_1 \succ g(1, b_2, \ldots, b_{n+1}) \\
&\quad - g(1, b_1 \succ b_2, b_3, \ldots, b_{n+1}) \\
&\quad + g(2, b_1, b_{2} \prec b_{3}, \ldots, b_{n+1}) \\
&\quad + \sum_{i=3}^{n} (-1)^i g(2, b_1, \ldots, b_i \circ b_{i+1}, \ldots, b_{n+1}) \\
&\quad + (-1)^{n+1} \left( g(2, b_1, \ldots, b_n) \prec b_{n+1} \right)
\end{aligned}
\]
Where $b_i\circ b_{i+1}=b_i\prec b_{i+1}+b_i\succ b_{i+1}+b_i.b_{i+1}$

\[
\text{A-part operation: } a_r \ast a_1 \ast a_2 \cdots a_{n+1} \quad \longleftrightarrow \quad \text{B-part coboundary: } (\delta g)(r, b_1, \cdots, b_{n+1})
\]
This, in turn, corresponds precisely to the following expression in the B-part coboundary:
\[
   \begin{aligned}
  (\delta_{tri}g)(r,b_1, \ldots,&  b_{n+1}) =   \sum_{i=1}^{r-1}b_1\succ g(r-1,b_2,\ldots,b_{n+1})\\
   &+\sum_{i=1}^{r-2}(-1)^if(r-1,b_1,\ldots, b_i\circ b_{i+1},\ldots,b_{n+1})\\
    &+(-1)^{r-1} g(r-1,b_1,\ldots,b_{r-1}\succ b_r,\ldots,b_{n+1})\\
     &+(-1)^{r} g(r,b_1,\ldots,b_{r}\prec b_{r+1},\ldots,b_{n+1})\\
     &+ \sum_{i=r+1}^{n}(-1)^if(r,b_1,\ldots, b_i\circ b_{i+1},\ldots,b_{n+1})\\
     &+\sum_{i=r+1}^{n}(-1)^{n+1}g(r,b_1,\ldots,b_{n})\prec b_{n+1}\\
     \end{aligned}
\]
For $r=3,\cdots,n$.

\[
\text{A-part operation: } a_{n+1} \ast a_n \ast  \cdots \ast a_{1} \quad \longleftrightarrow \quad \text{B-part coboundary: } (\delta g)(n+1, b_1, \cdots, b_{n+1})
\]
This, in turn, corresponds precisely to the following expression in the B-part coboundary:
\[
  \begin{aligned}
     (\delta_{tri}g)(n+1,b_1, \ldots,&  b_{n+1}) = b_1\succ g(n,b_2,\ldots,b_{n+1})\\
     &+\sum_{i=1}^{n-1}(-1)^i g(n,b_1,\ldots, b_i\circ b_{i+1},\ldots,b_{n+1})\\
     &+(-1)^{n} g(n,b_1,b_2,\ldots,b_n\succ b_{n+1})\\
&+\sum_{i=1}^n(-1)^{n+1}g(i,b_1,b_2,\ldots,b_n)\succ b_{n+1}.
\end{aligned}
\]
\[
\text{A-part operation: } (a_{1} \bullet a_2) \ast a_3  \cdots \ast a_{n+1} \quad \longleftrightarrow \quad \text{B-part coboundary: } (\delta_{tri} g)((1,2), b_1, \cdots, b_{n+1})
\]
This, in turn, corresponds precisely to the following expression in the B-part coboundary:

\[
\begin{aligned}
    (\delta_{tri} g)((1,2),b_1,\cdots,b_{n+1})&=b_1.g(1,b_2,\cdots,b_{n+1})-g(1,b_1.b_2,b_3,\cdots,b_{n+1})\\
    &+g((1,2),b_1,b_2\prec b_3,\cdots,b_{n+1})+\sum_{i=3}^{n}(-1)^ig((1,2),b_1,\cdots,b_i\circ b_{i+1},\cdots,b_{n+1})\\
    &+(-1)^{n+1}g((1,2),b_1,\cdots,b_n)\prec b_{n+1}
\end{aligned}
\]  

 \[
 \text{A-part operation: } (a_{1} \bullet a_2\bullet a_3) \ast a_4  \cdots \ast a_{n+1} \quad \longleftrightarrow \quad \text{B-part coboundary: } (\delta_{tri} g)((1,2,3), b_1, \cdots, b_{n+1})
\]
This, in turn, corresponds precisely to the following expression in the B-part coboundary:

\[
\begin{aligned}
    (\delta_{tri} g)((1,2,3),b_1,\cdots,b_{n+1})&=b_1.g((1,2),b_2,\cdots,b_{n+1})-g((1,2),b_1.b_2,b_3,\cdots,b_{n+1})\\
    &+g((1,2),b_1,b_2.b_3,\cdots,b_{n+1})-g((1,2),b_1,b_2,b_3 \prec b_4, b_5,\cdots,b_{n+1})\\
    &+\sum_{i=4}^{n}(-1)^ig((1,2,3),b_1,\cdots,b_i\circ b_{i+1},\cdots,b_{n+1})\\
    &+(-1)^{n+1}g((1,2,3),b_1,\cdots,b_n)\prec b_{n+1}
\end{aligned}
\]

We have the general correspondence:

\[
\text{A-part: } (a_{1} \bullet a_{2} \bullet \cdots \bullet a_{k}) \ast a_{k+1} \ast \cdots \ast a_{{n+1}} 
\quad \longleftrightarrow \quad 
\text{B-part: } (\delta_{\text{tri}} g)((1, \dots, k), b_1, \cdots, b_{n+1})
\]
\[
\begin{aligned}
    (\delta_{tri} g)((i_1,i_2,\ldots,i_k),b_1,\cdots,b_{n+1})&=b_1.g((i_1,i_2,\ldots,i_{k-1}),b_2,\cdots,b_{n+1})\\
&+\sum_{i=1}^{k-1}(-1)^ig((i_1,i_2,\ldots,i_{k-1}),b_1,\cdots,b_i.b_{i+1},\cdots,b_{n+1})\\
  &+(-1)^{k}g((i_1,i_2,\ldots,i_{k-1}),b_1,\cdots,b_{k}\prec b_{k+1},\cdots,b_{n+1})\\
    &+\sum_{i=k+1}^{n}(-1)^ig((i_1,i_2,\ldots,i_k),b_1,\cdots,b_i\circ b_{i+1},\cdots,b_{n+1})\\
    &+(-1)^{n+1}g((i_1,i_2,\ldots,i_k),b_1,\cdots,b_n)\prec b_{n+1}
\end{aligned}
\]  
For $k=1,\cdots,n$. The finial $k=n+1$:

\[
\text{A-part: } a_{1} \bullet a_{2} \bullet \cdots \bullet a_{n+1}  
\quad \longleftrightarrow \quad 
\text{B-part: } (\delta_{\text{tri}} g)((1, \dots, n+1), b_1, \cdots, b_{n+1})
\]

\[
\begin{aligned}
    (\delta_{tri} g)((1,2,\cdots,n+1),b_1,\cdots,b_{n+1})&=b_1.g((1,2,\cdots,n),b_2,\cdots,b_{n+1})\\
    &+\sum_{i=1}^{n}(-1)^ig((1,2,\cdots,n),b_1,\cdots,b_i. b_{i+1},\cdots,b_{n+1})\\
    &+(-1)^{n+1}g((1,2,\cdots,n),b_1,\cdots,b_n). b_{n+1}
\end{aligned}
\]

In this we way, we find that the coefficient of \(P(a_1, \dots, a_{n+1}; i_1,\cdots,i_k)\) is exactly 

\((\delta_{\mathrm{tri}} g)((i_1,\cdots,i_k); b_1, \dots, b_{n+1})\).Therefore,
\[
\delta_{\mathrm{HH}}(\Psi g) =\sum_{k=1}^{n} \sum_{1 \le i_1 < \dots < i_k \le n} P(a_1, \dots, a_{n}; i_1,\cdots,i_k) \otimes (\delta_{\mathrm{tri}} g)((i_1,\cdots,i_k); \dots) = \Psi(\delta_{\mathrm{tri}} g).
\]

This proves that \(\Psi\) is a cochain map.

   \textbf{ Injectivity of $\Psi$}
Suppose $\Psi(g) = 0$, since $A$ is a free algebra, there $g=0.$

     \end{proof}
\begin{corollary}
    Given the canonical embedding of cochain complexes
\[
C^*_{\mathrm{tri}}(B,N) \hookrightarrow C^*_{\mathrm{Hoch}}(A\otimes B,A\otimes N),
\]
we obtain a short exact sequence of complexes:
\[
0 \to C^*_{\mathrm{tri}}(B,N) \hookrightarrow C^*_{\mathrm{Hoch}}(A\otimes B,A\otimes N)\to Q^* \to 0,
\]
where $ Q^* = C^*_{\mathrm{Hoch}}(A\otimes B,A\otimes N)/C^*_{\mathrm{tri}}(B,N)$ is the quotient complex.

Applying the cohomology functor yields the long exact sequence:
\[
\begin{aligned}
\cdots \to H^{n-1}_{\mathrm{tri}}(B,N) &\to HH^{n-1}(A\otimes B,A\otimes N) \to H^{n-1}(Q^*) \\
&\to H^n_{\mathrm{tri}}(B,N) \to HH^n(A\otimes B,A\otimes N) \to H^n(Q^*) \\
&\to H^{n+1}_{tri}(B,N) \to HH^{n+1}(A\otimes B,A\otimes N) \to \cdots
\end{aligned}
\]
\end{corollary}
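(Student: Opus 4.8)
The plan is to reduce the entire statement to the standard homological algebra of a short exact sequence of cochain complexes, taking Theorem~\ref{5.4} as the only substantive input. First I would record that, by Theorem~\ref{5.4}, the map $\Psi$ is an injective morphism of cochain complexes, so it identifies $C^*_{\mathrm{tri}}(B,N)$ with its image $\Psi\big(C^*_{\mathrm{tri}}(B,N)\big)$, a graded subspace of $C^*_{\mathrm{Hoch}}(A\otimes B,A\otimes N)$. Because $\Psi$ commutes with the differentials, the identity $\delta_{\mathrm{HH}}(\Psi g)=\Psi(\delta_{\mathrm{tri}}g)$ shows that $\delta_{\mathrm{HH}}$ carries $\mathrm{im}\,\Psi$ into itself; hence $\Psi\big(C^*_{\mathrm{tri}}(B,N)\big)$ is genuinely a subcomplex, not merely a graded subspace. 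This is the one place where the cochain-map property of $\Psi$ is essential.

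Next I would define the quotient degreewise, $Q^n := C^n_{\mathrm{Hoch}}(A\otimes B,A\otimes N)\big/\Psi\big(C^n_{\mathrm{tri}}(B,N)\big)$, and check that $\delta_{\mathrm{HH}}$ descends to a well-defined differential $\bar\delta\colon Q^n\to Q^{n+1}$. This is immediate from the previous step: if two Hochschild cochains differ by an element of $\mathrm{im}\,\Psi$, then so do their $\delta_{\mathrm{HH}}$-images, so the induced map on cosets is unambiguous, and $\bar\delta^2=0$ follows at once from $\delta_{\mathrm{HH}}^2=0$. Together with the canonical projection $\pi\colon C^*_{\mathrm{Hoch}}\to Q^*$, which is a cochain map by the very construction of $\bar\delta$, this assembles into the short exact sequence of cochain complexes
\[
0 \to C^*_{\mathrm{tri}}(B,N) \xrightarrow{\ \Psi\ } C^*_{\mathrm{Hoch}}(A\otimes B,A\otimes N) \xrightarrow{\ \pi\ } Q^* \to 0,
\]
where injectivity of $\Psi$ gives exactness on the left and surjectivity of $\pi$ gives exactness on the right.

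Finally I would invoke the classical long exact sequence attached to a short exact sequence of cochain complexes (the zig-zag lemma). Applying the cohomology functor produces, for each $n$, a connecting homomorphism $\partial\colon H^n(Q^*)\to H^{n+1}_{\mathrm{tri}}(B,N)$ defined by the usual diagram chase: lift a $\bar\delta$-cocycle in $Q^n$ to a cochain in $C^n_{\mathrm{Hoch}}$, apply $\delta_{\mathrm{HH}}$, observe that the result lies in $\mathrm{im}\,\Psi$, and pull it back along the injection $\Psi$ to a tri-dendriform cocycle; one then checks independence of all choices. These connecting maps splice the cohomology of the three complexes into the displayed long exact sequence. I expect no genuine obstacle here: all of the computational content is already carried by Theorem~\ref{5.4}, and what remains is the purely formal machinery that converts any short exact sequence of complexes into a long exact sequence in cohomology.
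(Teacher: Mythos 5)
Your proposal is correct and matches the paper's approach: the paper states this corollary without a separate proof, treating it precisely as you do --- as the standard formal consequence (subcomplex via the injective cochain map $\Psi$ from Theorem~\ref{5.4}, quotient complex, zig-zag lemma) of the short exact sequence of complexes. Your write-up simply makes explicit the routine homological algebra the paper leaves implicit, including the one genuinely necessary observation that $\delta_{\mathrm{HH}}(\Psi g)=\Psi(\delta_{\mathrm{tri}}g)$ forces $\mathrm{im}\,\Psi$ to be closed under the differential.
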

\begin{example}\label{exm:5.7}
Let:
\begin{itemize}
\item $A = F\langle x_1,x_2,x_3\rangle$ be the free tri-algebra .
\item $B$ is the tri-dendriform algebra defined in \ref{exp:tridend}.
\end{itemize}
\end{example}
A Hochschild 2-cochain \( \Psi: (A \otimes B) \otimes (A \otimes B) \to (A \otimes B) \) can be expressed in terms of \( g \) as:
\[
\begin{aligned}
(\Psi g)(x_1 \otimes e, x_2 \otimes e) &=  x_1\ast x_2 \otimes g(1, e, e) + x_2\ast x_1 \otimes g(2, e, e)+x_1\bullet x_2 \otimes g((1,2),e,e)\\
&=\alpha (x_1\ast x_2 )\otimes e + \beta (x_2\ast x_1) \otimes e+ \gamma (x_1\bullet x_2) \otimes e
\end{aligned}
\]

We want \( \alpha, \beta, \gamma \) such that \( \delta_{HH} (\Psi g) = 0 \) (2-cocycle condition) when evaluated on \( x_1 \otimes e, x_2 \otimes e, x_3 \otimes e \).

For a 2-cochain \( \Psi g \), the Hochschild differential is:

\[
(\delta_{HH} (\Psi g))(u,v,w) = u \cdot (\Psi g)(v,w) - (\Psi g)(u v, w) + (\Psi g)(u, v w) - (\Psi g)(u,v) \cdot w.
\]

Here \( u = x_1 \otimes e \), \( v = x_2 \otimes e \), \( w = x_3 \otimes e \).

We compute each term.

1. First term: \( u \cdot \Psi(v,w) \):

Let \( v = x_2 \otimes e \), \( w = x_3 \otimes e \).

First compute \( \Psi(v,w) \):

\[
\Psi(x_2 \otimes e, x_3 \otimes e) 
= \alpha \, (x_2 \ast x_3) \otimes e \;+\; \beta \, (x_3 \ast x_2) \otimes e \;+\; \gamma \, (x_2 \bullet x_3) \otimes e.
\]

Now multiply \( u = x_1 \otimes e \) with \( \Psi(v,w) \):

Use product rule in \( A \otimes B \):

\[
(x_1 \otimes e) \cdot [ (x_2 \ast x_3) \otimes e ]
\]
First term of product: \( (a_1 \ast a_2) \otimes (b_1 \prec b_2) \):

Here \( a_1 = x_1, b_1 = e, a_2 = x_2 \ast x_3, b_2 = e \):

\[
= (x_1 \ast (x_2 \ast x_3)) \otimes (e \prec e) = (x_1 \ast (x_2 \ast x_3)) \otimes e.
\]

Second term of product: \( (a_2 \ast a_1) \otimes (b_1 \succ b_2) \):

Here \( a_2 = x_2 \ast x_3, a_1 = x_1, b_1 = e, b_2 = e \):

\[
= ((x_2 \ast x_3) \ast x_1) \otimes (e \succ e) = ((x_2 \ast x_3) \ast x_1) \otimes e.
\]

Third term of product: \( (a_1 \bullet a_2) \otimes (b_1 \cdot b_2) \):

Here \( a_1 = x_1, a_2 = x_2 \ast x_3, b_1 = e, b_2 = e \):

\[
= (x_1 \bullet (x_2 \ast x_3)) \otimes (e \cdot e) = (x_1 \bullet (x_2 \ast x_3)) \otimes (-e).
\]

So:

\[
(x_1 \otimes e) \cdot [ (x_2 \ast x_3) \otimes e ] 
= (x_1 \ast (x_2 \ast x_3)) \otimes e \;+\; ((x_2 \ast x_3) \ast x_1) \otimes e \;-\; (x_1 \bullet (x_2 \ast x_3)) \otimes e.
\]

Similarly for other parts of \( \Psi(v,w) \), so \( u \cdot \Psi(v,w) \) =  

\[
\alpha \big[ x_1 \ast (x_2 \ast x_3) + (x_2 \ast x_3) \ast x_1 - x_1 \bullet (x_2 \ast x_3) \big] \otimes e
\]
\[
+ \beta \big[ x_1 \ast (x_3 \ast x_2) + (x_3 \ast x_2) \ast x_1 - x_1 \bullet (x_3 \ast x_2) \big] \otimes e
\]
\[
+ \gamma \big[ x_1 \ast (x_2 \bullet x_3) + (x_2 \bullet x_3) \ast x_1 - x_1 \bullet (x_2 \bullet x_3) \big] \otimes e.
\]

2. Second term: \( -\Psi(uv, w) \)

First compute \( uv = (x_1 \otimes e)(x_2 \otimes e) \):

\[
uv = (x_1 \ast x_2 + x_2 \ast x_1 - x_1 \bullet x_2) \otimes e.
\]

Now \( -\Psi(uv, w) =  \)

\[
= -\alpha \, (x_1 \ast x_2) \ast x_3 \otimes e \;-\; \beta \, x_3 \ast (x_1 \ast x_2) \otimes e \;-\; \gamma \, (x_1 \ast x_2) \bullet x_3 \otimes e
\]
\[
- \alpha \, (x_2 \ast x_1) \ast x_3 \otimes e \;-\; \beta \, x_3 \ast (x_2 \ast x_1) \otimes e \;-\; \gamma \, (x_2 \ast x_1) \bullet x_3 \otimes e
\]
\[
+ \alpha \, (x_1 \bullet x_2) \ast x_3 \otimes e \;+\; \beta \, x_3 \ast (x_1 \bullet x_2) \otimes e \;+\; \gamma \, (x_1 \bullet x_2) \bullet x_3 \otimes e.
\]

3. Third term: \( +\Psi(u, vw)= \)

\[
= \alpha \, x_1 \ast (x_2 \ast x_3) \otimes e \;+\; \beta \, (x_2 \ast x_3) \ast x_1 \otimes e \;+\; \gamma \, x_1 \bullet (x_2 \ast x_3) \otimes e
\]
\[
+ \alpha \, x_1 \ast (x_3 \ast x_2) \otimes e \;+\; \beta \, (x_3 \ast x_2) \ast x_1 \otimes e \;+\; \gamma \, x_1 \bullet (x_3 \ast x_2) \otimes e
\]
\[
- \alpha \, x_1 \ast (x_2 \bullet x_3) \otimes e \;-\; \beta \, (x_2 \bullet x_3) \ast x_1 \otimes e \;-\; \gamma \, x_1 \bullet (x_2 \bullet x_3) \otimes e.
\]

4. Fourth term  \( -\Psi(u,v) \cdot w \) =

\[
-\alpha \big[ (x_1 \ast x_2) \ast x_3 + x_3 \ast (x_1 \ast x_2) - (x_1 \ast x_2) \bullet x_3 \big] \otimes e
\]
\[
- \beta \big[ (x_2 \ast x_1) \ast x_3 + x_3 \ast (x_2 \ast x_1) - (x_2 \ast x_1) \bullet x_3 \big] \otimes e
\]
\[
- \gamma \big[ (x_1 \bullet x_2) \ast x_3 + x_3 \ast (x_1 \bullet x_2) - (x_1 \bullet x_2) \bullet x_3 \big] \otimes e.
\]

 Combine all terms and group by \( \alpha, \beta, \gamma \) and by the type of operation \( \ast \) or \( \bullet \).

We now have:
\[
\begin{aligned}
 \delta (\Psi g)(x_1\otimes e, x_2\otimes e, x_3\otimes e) &=(\beta+\gamma)(x_1\ast x_2 \ast x_3)e\\
&-(\alpha+\gamma)(x_3\ast x_2 \ast x_1)e\\
&+(\alpha-\beta)((x_1\bullet x_3) \ast x_2)e.
\end{aligned}
\]
We can group by \( \alpha, \beta, \gamma \) and by the type of operation \( \ast \) or \( \bullet \).

The condition $\delta_{HH}(\Psi g)=0$ leads to :

\[
\alpha = \beta=-\gamma.
\]

Thus, cocycle defined as:

\[
\begin{aligned}
(\Psi g)(x_1 \otimes e, x_2 \otimes e)
&=\alpha \Big(x_1\ast x_2  + x_2\ast x_1- x_1\bullet x_2\Big) \otimes e 
\end{aligned}
\]
To compute the Hochschild coboundary \(\delta_{HH} (\Psi f)\) evaluated at \((x_1 \otimes e, x_2 \otimes e)\), we use the Hochschild differential formula for a 1-cochain \(\Psi f\):

\[
(\delta_{HH} (\Psi f))(u, v) = u \cdot (\Psi f)(v) - (\Psi f)(u \cdot v) + (\Psi f)(u) \cdot v.
\]

1. First term: \( (x_1 \otimes e) \cdot (\Psi f)(x_2 \otimes e) \):
     \[
     (\Psi f)(x_2 \otimes e) =\lambda x_2 \otimes  e.
     \]
   - Now multiply by \((x_1 \otimes e_1)\) using the product in \(A \otimes B\):
     \[
     (x_1 \otimes e) \cdot \left(\lambda x_2 \otimes  e \right) =\lambda \Big ( x_1\ast x_2 + x_2\ast x_1 -x_1\bullet x_2 \Big) \otimes e.
     \]

2. Second term: \( -(\Psi f)((x_1 \otimes e) \cdot (x_2 \otimes e)) \):
     \[
     -\lambda \Big ( x_1\ast x_2 + x_2\ast x_1 -x_1\bullet x_2 \Big) \otimes e.
     \]
  
3. Third term: \( (\Psi f)(x_1 \otimes e) \cdot (x_2 \otimes e) \):
     \[
     +\lambda \Big ( x_1\ast x_2 + x_2\ast x_1 -x_1\bullet x_2 \Big) \otimes e.
     \]

{Combining all terms}
\[
\begin{aligned}
(\delta_{HH} (\Psi f))(x_1 \otimes e, x_2 \otimes e) = \lambda \Big ( x_1\ast x_2 + x_2\ast x_1 -x_1\bullet x_2 \Big) \otimes e.
\end{aligned}
\]
    Comparing \(g\) with $\delta_{\text{tri}} f$, we see that:

For $f$ to be a coboundary, we must have 
\[
(\Psi g)(x_1\otimes e,x_2\otimes e) = (\delta_{\text{HH}} (\Psi f))(x_1\otimes e,x_2\otimes e),
\]
which implies 
\[
\alpha=\lambda
\]

Thus, any such $\Psi g$ can be expressed as $\delta_{\text{HH}} (\Psi f)$ for some $f$, meaning $\Psi g$ is a coboundary.
So:
\[
H^2(B,B)=0.
\] 
\subsection*{Acknowledgments}
The author is grateful 
to  Kolesnikov P.S.
 and for discussions and useful comments.

\subsection*{Data Availability Statement}
Data sharing is not applicable to this 
article as no new data were created or analyzed in this study.

\end{document}